\newcommand{\ra}{\rightarrow}
\newcommand{\bb}[1]{\mathbb{#1}}
\newcommand{\Z}{\bb{Z}}
\newcommand{\Q}{\bb{Q}}
\newcommand{\R}{\bb{R}}
\newcommand{\C}{\bb{C}}
\newcommand{\Zp}{\Z_p}
\newcommand{\Qp}{\Q_p}
\newcommand{\pp}{\mathfrak{P}}
\newcommand{\ten}{\otimes}
\newcommand{\teno}[1]{\ten_{#1}}
\newcommand{\gal}[1]{\mathrm{Gal}(#1)}
\newcommand{\of}{\circ}
\newcommand{\br}[1]{\bar{#1}} 
\newcommand{\fd}[2]{{[#1:#2]}}
\newcommand{\st}{^\times}
\newcommand{\ie}{i.e. }
\newcommand{\can}{\simeq}
\newcommand{\Hom}{\mathrm{Hom}}
\newcommand{\End}{\mathrm{End}}
\newcommand{\Det}{\textstyle{\det}} 
\newcommand{\spec}{\mathrm{Spec}\spc}
\newcommand{\oh}{\mathcal{O}}
\newcommand{\class}{\mathrm{Cl}}
\newcommand{\gen}[1]{\langle #1 \rangle}
\newcommand{\abqp}{\gal{\br{\Q}_p/{\Qp}}}
\newcommand{\spc}{\mbox{ }}
\newcommand{\Mod}{\spc\mathrm{mod}\spc}
\newcommand{\con}{\subseteq}
\newcommand{\ann}{\mathrm{ann}}
\newcommand{\iJ}[1]{\mathcal{J}(#1)}
\newcommand{\E}{\mathcal{E}}
\newcommand{\eps}{\epsilon}
\newcommand{\sat}{\spc | \spc}
\newcommand{\stick}{\theta}
\newcommand{\absnm}{\mathbf{N}}
\newcommand{\rk}{\mathrm{rk}}
\newcommand{\fitt}{\mathrm{Fitt}}
\newcommand{\blf}[2]{\gen{#1,#2}}
\newcommand{\chr}{\mathrm{char}}
\newcommand{\Cp}{\mathbb{C}_p}
\newcommand{\chgrp}[1]{\widehat{#1}}
\newcommand{\bw}{\textstyle{\bigwedge}}
\newcommand{\bwn}[1]{\bw^{#1}}
\newcommand{\bwo}[1]{\bw_{#1}}
\newcommand{\rubw}[1]{\bw_0^{#1}}
\newcommand{\rublat}[1]{\Omega_{#1}}
\newcommand{\rubhyp}[1]{$(\mathbf{St}#1)$}
\newcommand{\fullrke}[2]{e_{#1}[#2]}
\newcommand{\rke}[1]{\fullrke{}{#1}}
\newcommand{\sr}{r}
\newcommand{\lin}[1]{\mathcal{D}(#1)}
\newcommand{\Mor}{\mathrm{Mor}}
\newcommand{\tfwo}[2]{\bw_{#1,\mathrm{tf}}^{#2}}
\newcommand{\zinv}[1]{\Z[1/#1]}
\newcommand{\pl}{v}
\newcommand{\Pl}{w}
\newcommand{\gap}{\vspace{0.25cm}}
\newtheorem{theorem}{Theorem}[section]
\newtheorem{prop}[theorem]{Proposition}
\newtheorem{lemma}[theorem]{Lemma}
\newtheorem{cor}[theorem]{Corollary}
\newtheorem{definition}[theorem]{Definition}
\newtheorem{conj}[theorem]{Conjecture}
\numberwithin{equation}{section}
\numberwithin{table}{section}
\newenvironment{remark}{\vspace{0.25cm} \noindent \textbf{Remark}.}{\vspace{0.25cm}}
\newenvironment{proof}{\emph{Proof}.}{\hspace{\stretch{1}}\rule{1.5ex}{1.5ex} \vspace{5 mm}}
\begin{document}

\begin{center}
\LARGE The fractional Galois ideal \\
for arbitrary order of vanishing \\
\vspace{0.25cm}
\large \textsc{Paul Buckingham} \\
\footnotesize{617 CAB, Math and Statistical Sciences, University of Alberta, Edmonton AB T6G 2G1, Canada} \\
\footnotesize{p.r.buckingham@ualberta.ca}
\end{center}

\begin{abstract}
\noindent We propose a candidate, which we call the fractional Galois ideal after Snaith's fractional ideal, for replacing the classical Stickelberger ideal associated to an abelian extension of number fields. The Stickelberger ideal can be seen as gathering information about those $L$-functions of the extension which are non-zero at the special point $s = 0$, and was conjectured by Brumer to give annihilators of class-groups viewed as Galois modules. An earlier version of the fractional Galois ideal extended the Stickelberger ideal to include $L$-functions with a simple zero at $s = 0$, and was shown by the present author to provide class-group annihilators not existing in the Stickelberger ideal. The version presented in this article deals with $L$-functions of arbitrary order of vanishing at $s = 0$, and we give evidence using results of Popescu and Rubin that it is closely related to the Fitting ideal of the class-group, a canonical ideal of annihilators.

Finally, we prove an equality involving Stark elements and class-groups originally due to B\"uy\"ukboduk, but under a slightly different assumption, the advantage being that we need none of the Kolyvagin system machinery used in the original proof.

\vspace{0.25cm}

\noindent 2000 Mathematics Subject Classification: Primary 11R42; Secondary 11R29.

\vspace{0.25cm}

\noindent Keywords: Stickelberger ideal, $L$-functions, units, class-groups.
\end{abstract}

\section{Introduction} \label{intro}

In \cite{snaith:stark}, Snaith defined a \emph{fractional ideal} $\mathcal{J}_k(L/K)$ in $\Z[G]$ associated to an abelian extension $L/K$ of number fields for each negative integer $k$, where $G = \gal{L/K}$. The motivation for doing so was to address the following problem: The Coates--Sinnott Conjecture \cite{cs:stickel} predicts that, for a chosen negative integer $k$, the classical $k$th Stickelberger ideal in $\Z[G]$ provides annihilators for the $K$-group $K_{-2k}(\oh_L)$, where $\oh_L$ is the ring of integers in $L$. However, the Stickelberger ideal is often zero, and even in the case $K = \Q$ provides non-trivial annihilators for only one of the two eigenspaces for complex conjugation on the $K$-group. Snaith's solution was to use the leading coefficients of $L$-functions at the special point $s = k$ in the definition of $\mathcal{J}_k(L/K)$, rather than values as in the case of the Stickelberger ideal. Thus $\mathcal{J}_k(L/K)$ is always non-trivial, and further, he showed that in the case $K = \Q$, it does indeed give non-trivial annihilators in both the plus and minus eigenspaces.

The present author considered in \cite{buckingham:frac} the case of $L$-function derivatives at $s = 0$ rather than at strictly negative integers, relating to the problem of ideal class-groups -- zeroth $K$-groups -- instead of higher $K$-groups. The Coates--Sinnott Conjecture discussed above was inspired by a conjecture of Brumer predicting that the zeroth Stickelberger ideal gives annihilators for the class-group, although the Stickelberger ideal is often zero in this case as well. The aim of \cite{buckingham:frac} was to provide evidence that a fractional ideal similar to Snaith's would provide more annihilators of the class-group than the Stickelberger ideal. The emphasis was on the situation where the base field $K$ was $\Q$, and particularly extensions of prime-power conductor, though the behaviour of the fractional ideal $\iJ{L/K}$ was similar when $K$ was imaginary quadratic. The importance of taking leading coefficients rather than values in the cyclotomic case was made even more apparent in \cite{bs:fracfunct}. There, we took an inverse limit of the $\iJ{L_n/\Q}$ in a cyclotomic tower $L_1 \con L_2 \con L_3 \con \cdots$, and showed that the plus part, corresponding to $L$-functions with order of vanishing $1$ at $s = 0$, annihilated a limit of class-groups via the theory of cyclotomic units, while the minus part, corresponding to $L$-functions which are non-zero at $s = 0$, annihilated that limit of class-groups via the Main Conjecture of Iwasawa Theory. See \cite[Section 6.2.1]{bs:fracfunct} for details.

The three papers \cite{snaith:stark,buckingham:frac,bs:fracfunct} mentioned above suggest that the fractional ideal may be a suitable extension of the Stickelberger ideal in those cases, and thus indicate possible refinements of the Brumer and Coates--Sinnott conjectures. However, none of those papers satisfactorily addresses what should happen when the order of vanishing of the $L$-functions exceeds $1$. The aim of the current article is to tackle exactly that issue. This will require redefining the fractional ideal -- see Definition \ref{def iJ} -- to decompose it in terms of orders of vanishing of $L$-functions. The new version coincides with the old for the parts corresponding to $L$-functions having order of vanishing $0$ or $1$ at $s = 0$. In Section \ref{sec class-groups}, we support this choice of definition by using results of Popescu and Rubin to relate the fractional ideal to annihilators of class-groups, even when the $L$-functions involved vanish to order higher than $1$.

In the final section, we use the same result of Rubin mentioned above to prove an equality, originally due to B\"uy\"ukboduk, involving class-groups and the index of Stark units, but this time under a different assumption from that of B\"uy\"ukboduk. This new assumption has the advantage that the equality can be proven using a much more straightforward argument.

\section{Preliminary algebra}

\subsection{Determinants of $F[G]$-modules} \label{sec lin}

Let $R$ be any commutative ring with non-zero identity, and $M$ a finitely generated projective module over $R$. (We have in mind the situation $R = F[G]$ where $G$ is a finite abelian group and $F$ is a field whose characteristic does not divide $|G|$, in which case all modules are projective by Maschke's Theorem.) If $h \in \End_R(M)$, we arbitrarily choose a finitely generated $R$-module $N$ such that $M \oplus N$ is free, and extend $h$ to $h \oplus 1 : M \oplus N \ra M \oplus N$. Then we define $\Det_R(h)$ to be $\Det_R(h \oplus 1)$. This is independent of the choice of $N$. We note that, as usual, $\Det_R(h_1 \of h_2) = \Det_R(h_1)\Det_R(h_2)$.

Now, let $G$ be a finite abelian group and $F$ a subfield of $\C$. The character group $\Hom(G,\C\st)$ will be denoted $\chgrp{G}$.

Given a finitely generated $F[G]$-module $M$ and a non-negative integer $\sr$, let $\fullrke{M}{\sr}$ denote the sum in $\C[G]$ of the idempotents corresponding to characters $\chi \in \chgrp{G}$ for which $\blf{\chi}{\chi_M} = \sr$, where $\chi_M$ is the character of the representation $M \teno{F} \C$ and $\blf{\cdot}{\cdot}$ is the usual Hermitian product on the space of class functions on $G$. In fact, $\fullrke{M}{\sr} \in F[G]$.

\begin{definition}
Let $M$ be a finitely generated $F[G]$-module. We define the determinant of $M$, denoted $\lin{M}$, to be the $F[G]$-module
\[ \bigoplus_{\sr=0}^\infty \fullrke{M}{\sr} \bwo{F[G]}^\sr M .\]
\end{definition}

Note that, since $\chgrp{G}$ is finite, $\fullrke{M}{\sr} = 0$ for $\sr$ large enough.

\begin{prop}
For any finitely generated $F[G]$-module $M$, $\lin{M}$ is a free $F[G]$-module on one generator.
\end{prop}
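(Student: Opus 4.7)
The plan is to invoke the Wedderburn decomposition of $F[G]$ and reduce to a transparent one-dimensional calculation after base change to $\C$.

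Since $F$ has characteristic zero, Maschke's theorem yields $F[G] \iso \prod_\psi F_\psi$, where $\psi$ ranges over the $\gal{\br F/F}$-orbits in $\chgrp{G}$ and each $F_\psi$ is a field of degree $|\psi|$ over $F$. Write $e_\psi \in F[G]$ for the primitive idempotent corresponding to $\psi$; inside $\C[G]$ one has $e_\psi = \sum_{\chi \in \psi} e_\chi$. Since $\chi_M$ is the character of an $F$-representation, the inner product $\blf{\chi}{\chi_M}$ is constant on each orbit, so each idempotent $\fullrke{M}{\sr}$ is the sum of those $e_\psi$ for which $\sr_\psi := \blf{\chi}{\chi_M}$ (with $\chi \in \psi$) equals $\sr$. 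Consequently,
\[ \lin{M} \iso \bigoplus_\psi e_\psi \bwo{F[G]}^{\sr_\psi} M , \]
and it suffices to prove that each summand is free of rank one over $F_\psi = e_\psi F[G]$.

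Each such summand is automatically an $F_\psi$-vector space, and I would compute its $F_\psi$-dimension by extending scalars to $\C$. Setting $V_\chi := e_\chi(M \teno{F} \C)$, one has $\dim_\C V_\chi = \blf{\chi}{\chi_M}$, and the key observation is that exterior powers split along the Wedderburn decomposition:
\[ \bwo{\C[G]}^{\sr_\psi}(M \teno{F} \C) \iso \bigoplus_{\chi \in \chgrp{G}} \bwo{\C}^{\sr_\psi} V_\chi , \]
because any wedge product involving factors from distinct $V_\chi$ carries a trivial product $e_\chi e_{\chi'} = 0$ and therefore vanishes. This is the delicate point of the argument, but once it is established the rest is formal: applying $e_\psi$ picks out the summands with $\chi \in \psi$, each of which is one-dimensional over $\C$, being the top exterior power of a $\sr_\psi$-dimensional space. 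Hence $e_\psi \bwo{F[G]}^{\sr_\psi} M$ has $F$-dimension $|\psi| = [F_\psi : F]$, which forces it to be one-dimensional over the field $F_\psi$, as required.
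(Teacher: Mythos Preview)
Your proof is correct and follows essentially the same approach as the paper. The paper's proof consists of a single sentence invoking the ``readily proven fact'' that $\fullrke{M}{\sr}\bwo{F[G]}^{\sr} M \cong \fullrke{M}{\sr} F[G]$ for each $\sr$, and your argument supplies precisely the details behind that fact: pass to the Wedderburn decomposition $F[G]\cong\prod_\psi F_\psi$, observe that $e_\psi\bwo{F[G]}^{\sr_\psi} M$ is the top exterior power of the $F_\psi$-vector space $e_\psi M$, and verify its dimension by comparing with $\C$. One could shorten the calculation slightly by noting directly that $e_\psi M$ is an $\sr_\psi$-dimensional $F_\psi$-vector space (rather than base-changing the exterior power itself), but this is a cosmetic difference.
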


\begin{proof}
This follows from the readily proven fact that for every $\sr \geq 0$, $\fullrke{M}{\sr} \bwo{F[G]}^\sr M \cong \fullrke{M}{\sr} F[G]$ (non-canonically).
\end{proof}

Suppose $M_1$ and $M_2$ are finitely generated $F[G]$-modules which are isomorphic to each other. Then given $\alpha \in \Hom_{F[G]}(M_1,M_2)$, we can form the homomorphism $\lin{\alpha} \in \Hom_{F[G]}(\lin{M_1},\lin{M_2})$ by taking exterior powers and applying the idempotents  $\fullrke{M_1}{\sr} = \fullrke{M_2}{\sr}$. In this way, $\lin{-}$ can be thought of as a functor from the category whose objects are finitely generated $F[G]$-modules with morphisms
\[ \Mor(M_1,M_2) = \left\{
\begin{array}{ll}
\Hom_{F[G]}(M_1,M_2) & \textrm{if $M_1 \cong M_2$ over $F[G]$} \\
\emptyset & \textrm{otherwise} ,
\end{array}
\right. \]
to the category of rank $1$ free $F[G]$-modules.

In terms of taking determinants of endomorphisms of finitely generated $F[G]$-modules, $\lin{-}$ can be viewed as an appropriate analogue of the maximal exterior power of a finite dimensional vector space. Indeed,

\begin{prop} \label{det of lin}
If $M$ is a finitely generated $F[G]$-module and $\alpha \in \End_{F[G]}(M)$, then $\Det_{F[G]}(\lin{\alpha}) = \Det_{F[G]}(\alpha)$.
\end{prop}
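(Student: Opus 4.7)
The plan is to extend scalars to $\C$ and verify the equality character by character. First I would establish that both $\lin{-}$ and $\Det_{F[G]}$ are compatible with the injection $F[G] \hookrightarrow \C[G]$: for the former, the idempotents $\fullrke{M}{\sr}$ already live in $F[G]$ and are unchanged under $\teno{F} \C$, so $\lin{M \teno{F} \C} \iso \lin{M} \teno{F} \C$ naturally as $\C[G]$-modules and $\lin{\alpha \ten 1} = \lin{\alpha} \ten 1$; for the latter, given an $F[G]$-module $N$ with $M \oplus N$ free, $N \teno{F} \C$ serves as a $\C[G]$-complement of $M \teno{F} \C$ with $(M \oplus N) \teno{F} \C$ still free, so $\Det_{\C[G]}(\alpha \ten 1)$ equals the image of $\Det_{F[G]}(\alpha)$ under $F[G] \hookrightarrow \C[G]$. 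Since this inclusion is injective, it will suffice to treat the case $F = \C$.

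In the split case, $\C[G] \iso \prod_{\chi \in \chgrp{G}} \C$ via the primitive idempotents $e_\chi$, and every finitely generated $\C[G]$-module canonically decomposes as $M = \bigoplus_\chi M_\chi$ with $M_\chi = e_\chi M$ of $\C$-dimension $d_\chi = \blf{\chi}{\chi_M}$. Correspondingly $\alpha = \bigoplus_\chi \alpha_\chi$ with $\alpha_\chi \in \End_\C(M_\chi)$, and unwinding the definition of $\Det_{\C[G]}$ (componentwise over the product ring) gives $\Det_{\C[G]}(\alpha) = \sum_\chi \det(\alpha_\chi)\, e_\chi$. Since $\bwo{\C[G]}^\sr M$ splits into $e_\chi$-components as $\bwo{\C}^\sr M_\chi$, the idempotent $\fullrke{M}{\sr}$ picks out precisely those $\chi$ with $d_\chi = \sr$, yielding a canonical isomorphism
\[ \lin{M} \iso \bigoplus_\chi \bwo{\C}^{d_\chi} M_\chi \]
of $\C[G]$-modules, in which each summand is $1$-dimensional over $\C$. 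Under this identification $\lin{\alpha}$ acts on the $\chi$-summand as the ordinary top exterior power of $\alpha_\chi$, i.e.\ multiplication by $\det(\alpha_\chi)$; hence $\lin{\alpha}$ is multiplication by $\sum_\chi \det(\alpha_\chi) e_\chi$ on the rank-$1$ free $\C[G]$-module $\lin{M}$, and this scalar is by definition $\Det_{\C[G]}(\lin{\alpha})$, matching $\Det_{\C[G]}(\alpha)$.

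The hard part will be the first step, namely carefully verifying the behaviour of $\Det_R$ under extension of scalars: its definition depends on an auxiliary choice of complement $N$, so one must confirm both that the resulting element is independent of $N$ and that it transforms correctly under $\teno{F} \C$. Once that compatibility is in place, the character-by-character decomposition over $\C[G]$ makes the identity essentially transparent.
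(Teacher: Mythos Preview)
Your approach is correct but differs from the paper's. The paper reduces directly to the case where $M$ is free over $F[G]$: choose $N$ with $M \oplus N$ free, observe that $\lin{M \oplus N} \cong \lin{M} \teno{F[G]} \lin{N}$ (via the decomposition $\bwn{k}(M \oplus N) \cong \bigoplus_{i+j=k} \bwn{i} M \ten \bwn{j} N$ together with the identity $\fullrke{M \oplus N}{k} = \sum_{i+j=k} \fullrke{M}{i}\fullrke{N}{j}$), and hence $\Det(\lin{\alpha \oplus 1_N}) = \Det(\lin{\alpha})\Det(\lin{1_N}) = \Det(\lin{\alpha})$. Since $\Det(\alpha \oplus 1_N) = \Det(\alpha)$ by definition, one is reduced to the free case, where $\lin{M} = \bwn{n} M$ for $n = \rk M$ and the statement is the classical fact that the top exterior power computes the determinant.

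Your route---base-change to $\C[G]$ and decompose character by character---is equally valid and arguably more transparent, since it makes explicit that both sides equal $\sum_\chi \det(\alpha_\chi)\,e_\chi$. The paper's argument, by contrast, never leaves $F[G]$ and isolates the purely algebraic ingredient (multiplicativity of $\lin{-}$ on direct sums), which would generalize to other semisimple base rings. One minor comment: the step you flag as ``hard'' (compatibility of $\Det_R$ with scalar extension) is in fact routine---the auxiliary complement $N$ base-changes to a complement over $\C[G]$ and the determinant of a matrix is preserved under ring extension---so your proof is essentially complete as written.
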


\begin{proof}
A simple argument allows one to reduce to the case where $M$ is free, and then the result is standard.
\end{proof}

\section{$T$-modified $L$-functions}

The following notions will be useful later in the paper.

\begin{definition} \label{def basic triple}
A basic triple is a triple $(L/K,S,T)$ where $L/K$ is an abelian extension of global fields, $S$ is a finite non-empty set of places of $K$, containing the infinite ones if $\chr(K) = 0$, and $T$ is a finite non-empty set of places of $K$ disjoint from $S$.
\end{definition}

Take a basic triple $(L/K,S,T)$ and let $G = \gal{L/K}$. For a character $\chi$ of $G$, define the $T$-modified $L$-function $L_{L/K,S,T}(s,\chi)$ by
\[ L_{L/K,S,T}(s,\chi) = L_{L/K,S}(s,\chi) \prod_{\pl \in T} \chr_{\chi,\pl}(\absnm\pl^{1-s}) .\]
(Here, $\chr_{\chi,\pl}(x) \in \C[x]$ denotes the characteristic polynomial of the action of the Frobenius at $v$ on the inertia fixed-points of a realization of $\chi$.) We then let $\stick_{L/K,S,T} = \sum_{\chi \in \chgrp{G}} L_{L/K,S,T}^*(0,\chi) e_{\br{\chi}} \in \R[G]\st$, where for each $\chi \in \chgrp{G}$, $L_{L/K,S,T}^*(0,\chi)$ denotes the leading coefficient of the Talyor series of $L_{L/K,S,T}(s,\chi)$ at $s = 0$, and $e_\chi$ denotes the corresponding idempotent in $\C[G]$.

\section{Rubin's Conjecture} \label{sec rub conj}

In a series of papers culminating in \cite{stark:iv}, Stark predicted an intriguing connection between derivatives of $L$-functions at $s = 0$ and regulators, a connection which can be viewed as an equivariant refinement of the analytic class number formula. A more modern formulation due to Tate can be found in \cite{tate:stark}. In fact, we will be concerned with a stronger form of this conjecture, due to Rubin and outlined in Section \ref{sec state rub conj}, which predicts that the derivatives of $L$-functions can in fact be viewed as the regulators of specific elements lying in the units of the extension field, or rather, in an exterior power of the units.

Before stating Rubin's Conjecture, we need to introduce some notation. As explained in \cite[Section 1.2]{rubin:integralstark}, if $M$ is a $\Z[G]$-module ($G$ an arbitrary finite abelian group), then for each $\sr \geq 0$ there is a well-defined homomorphism
\begin{eqnarray*}
\bwn{\sr} \Hom_{\Z[G]}(M,\Z[G]) &\ra& \Hom_{\Z[G]}(\bwn{\sr} M,\Z[G]) \\
\phi_1 \wedge \cdots \wedge \phi_\sr &\mapsto& \left(m_1 \wedge \cdots \wedge m_\sr \mapsto \Det(\phi_i(m_j)) \right) .
\end{eqnarray*}

By abuse of notation, we will denote the image of $\phi_1 \wedge \cdots \wedge \phi_\sr$ under this map by the same symbol, so that given $m_1,\ldots,m_\sr \in M$ we write simply
\[ (\phi_1 \wedge \cdots \wedge \phi_\sr)(m_1 \wedge \cdots \wedge m_\sr) = \Det(\phi_i(m_j)) .\]
We will also extend the map $(\phi_1 \wedge \cdots \wedge \phi_\sr)(-)$ linearly to $(\bwn{\sr} M) \teno{\Z} \Q \ra \Q[G]$.

\begin{definition}
For any $\Z[G]$-module $M$ and any $\sr \geq 0$, define $\rubw{\sr} M$ to be
\[ \{m \in (\bwn{\sr} M) \teno{\Z} \Q \sat \textrm{$(\phi_1 \wedge \cdots \wedge \phi_\sr)(m) \in \Z[G]$ for all $\phi_i \in \Hom_{\Z[G]}(M,\Z[G])$} \} .\]
\end{definition}

If $(L/K,S,T)$ is a basic triple, we let $r(\chi) = \blf{\chi}{\oh_{L,S}\st \teno{\Z} \C}$, which is also the order of vanishing of the $L$-function $L_{L/K,S}(s,\chi)$ at $s = 0$ by \cite[Ch.I, Prop.3.4]{tate:stark}. ($r(\chi)$ depends only on $S$, not $T$.) Then set
\[ U_{S,T} = \{u \in \oh_{L,S}\st \sat \textrm{$u \equiv 1 \Mod \Pl$ for all places $\Pl$ of $L$ above $T$}\} \]
and
\[ \rublat{S,T,\sr} = \{u \in \rubw{\sr} U_{S,T} \sat \textrm{$e_\chi u = 0$ for all $\chi \in \chgrp{G}$ with $r(\chi) \not= \sr$}\} .\]
Since $U_{S,T}$ has maximal rank in $\oh_{L,S}\st$, we can identify $U_{S,T} \teno{\Z} \Q$ with $\oh_{L,S}\st \teno{\Z} \Q$, so $\rublat{S,T,\sr}$ can be naturally viewed as a $\Z[G]$-submodule of $\rke{\sr} \bwo{\Q[G]}^\sr (\oh_{L,S}\st \teno{\Z} \Q)$ and hence of $\lin{\oh_{L,S}\st \teno{\Z} \Q}$, where
\[ \rke{\sr} = \sum_{\substack{\chi \in \chgrp{G} \\ r(\chi) = \sr}} e_\chi \in \Q[G] .\]
(Note that $\rke{\sr}$ is the idempotent $\fullrke{M}{\sr}$ defined in Section \ref{sec lin} with $F = \Q$ and $M = \oh_{L,S}\st \teno{\Z} \Q$.)

\subsection{Statement of Rubin's Conjecture} \label{sec state rub conj}

Consider the regulator map $\lambda : \oh_{L,S}\st \teno{\Z} \R \ra X \teno{\Z} \R$, where $X$ is the kernel of the augmentation map on the free abelian group on the set $S_L$ of places of $L$ above those in $S$. We remind the reader that for $u \in \oh_{L,S}\st$, $\lambda(u \ten 1) = \sum_{\pp \in S_L} \log \|u\|_\pp \cdot \pp$, where the absolute values $\|\cdot\|_\pp$ are normalized so that the product formula holds.

Let $(L/K,S,T)$ be a basic triple and $\sr \geq 0$ an integer. The following hypotheses are required:

\begin{tabular}{cp{10.5cm}}
\rubhyp{1} & $S$ contains the ramified places, and the infinite ones if $\chr(K) = 0$. \\
\rubhyp{2} & $S$ contains at least $\sr$ places which split completely in $L/K$. \\
\rubhyp{3} & $S$ contains at least $\sr+1$ places. \\
\rubhyp{4} & $U_{S,T}$ is $\Z$-torsion free.
\end{tabular}

Denote by $\lambda^{(\sr)} : (\bwo{\Z[G]}^\sr \oh_{L,S}\st) \teno{\Z} \R \ra (\bwo{\Z[G]}^\sr X) \teno{\Z} \R$ the map induced by the regulator $\lambda$. Also, if $A$ is a subring of $\C$ and $M$ is an $A[G]$-module, write $\tfwo{A[G]}{\sr} M$ for the image of $\bwo{A[G]}^\sr M$ in $(\bwo{A[G]}^\sr M) \teno{A} \C$. The following is \cite[Conjecture B]{rubin:integralstark}:

\begin{conj} \label{rubin's conj}
Granted the hypotheses \rubhyp{1} to \rubhyp{4},
\[ \rke{\sr} \stick_{L/K,S,T} \tfwo{\Z[G]}{\sr} X \con \lambda^{(\sr)}(\rublat{S,T,\sr}) .\]
\end{conj}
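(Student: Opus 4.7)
The plan is to work character-by-character, exploiting that $\rke{\sr}$ is the sum of the idempotents $e_\chi$ over those $\chi \in \chgrp{G}$ with $r(\chi) = \sr$. After extending scalars to $\C$ and applying $e_\chi$, the containment reduces, for each such $\chi$ separately, to the claim that a specific complex multiple of an element of $\bwo{\C[G]}^\sr X$ lies in $\lambda^{(\sr)}(\rublat{S,T,\sr}) \teno{\Z} \C$. Observe that hypothesis \rubhyp{2} guarantees that $\sr$ is at most the order of vanishing of $L_{L/K,S}(s,\chi)$ at $s=0$, and equality is forced precisely on the characters picked up by $\rke{\sr}$, so on this part $L^*_{L/K,S,T}(0,\chi)$ is genuinely a leading coefficient of derivatives and $\lambda^{(\sr)}$ is injective on $e_\chi$-components.

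First, I would invoke Stark's Conjecture in Tate's formulation to obtain a rational preimage: for each $\chi$ with $r(\chi) = \sr$, the leading term $L^*_{L/K,S,T}(0,\br\chi)$ equals, up to a factor in the field generated by the values of $\chi$, a classical regulator built from a basis of $e_\chi(\oh_{L,S}\st \teno{\Z} \Q)$. Packaged equivariantly over all such $\chi$, this produces an element $u \in \rke{\sr} \bwo{\Q[G]}^\sr (\oh_{L,S}\st \teno{\Z} \Q)$ whose image under $\lambda^{(\sr)}$ recovers $\rke{\sr}\stick_{L/K,S,T}$ applied to any chosen basis of $\rke{\sr}\tfwo{\Q[G]}{\sr}X$.

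Second, the heart of the argument is to promote $u$ to an element of the Rubin lattice $\rublat{S,T,\sr}$ rather than merely of $\rubw{\sr} U_{S,T} \teno{\Z} \Q$. This requires verifying the integrality condition that for every $\phi_1,\ldots,\phi_\sr \in \Hom_{\Z[G]}(U_{S,T},\Z[G])$, the value $(\phi_1 \wedge \cdots \wedge \phi_\sr)(u)$ lies in $\Z[G]$; hypothesis \rubhyp{4} ensures there is no $\Z$-torsion to obstruct comparing lattices. I would attempt this either by constructing explicit cyclotomic or elliptic units and their Euler/Kolyvagin systems, available when $K = \Q$ or $K$ is imaginary quadratic, or, when $\chr(K) = \prm$, by invoking Popescu's construction of integral Stark elements from Picard schemes of the smooth projective curve associated to $L$.

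The main obstacle is exactly this integrality step: it is precisely what makes Rubin's refinement strictly stronger than Tate's rational form of Stark's Conjecture, and it is open in general. Complete proofs are known only in restricted regimes --- function fields of characteristic $\prm$ (Popescu), abelian extensions of $\Q$ or of imaginary quadratic fields, and the low-rank cases $\sr \leq 1$ --- so realistically the plan splits the argument according to which character components $e_\chi$ admit an accessible construction and treats the remaining components as hypothesis. This is the vantage point adopted in the remainder of this article, where Conjecture \ref{rubin's conj} is used as an input to the analysis of the fractional Galois ideal rather than being established.
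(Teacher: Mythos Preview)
The paper does not prove this statement: it is stated as a \emph{conjecture} (Rubin's Conjecture~B from \cite{rubin:integralstark}) and is used throughout as a standing hypothesis, with Section~\ref{known cases} listing the regimes in which it is known. Your proposal correctly arrives at this same conclusion in its final paragraph, and the special cases you enumerate (function fields via Popescu, abelian over $\Q$ or imaginary quadratic, low rank) match the paper's list of known cases.

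That said, the earlier paragraphs of your proposal read as though a proof is being attempted, and there the main issue is the one you yourself flag: the passage from the rational Stark element $u \in \rke{\sr}\bwo{\Q[G]}^\sr(\oh_{L,S}\st \teno{\Z} \Q)$ to an element of the Rubin lattice $\rublat{S,T,\sr}$ is exactly the content of the conjecture, and no general mechanism is given. Invoking Stark's Conjecture in Tate's form for the first step is also itself conjectural in general, so even the rationality of $u$ is not established outside the same list of special cases. In short, your write-up is a fair summary of the landscape around the conjecture rather than a proof, and that is consistent with how the paper treats the statement.
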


Assuming Conjecture \ref{rubin's conj} holds, there is a unique $\Z[G]$-submodule $\E_{S,T,\sr}$ of $\rublat{S,T,\sr}$ such that $\rke{\sr} \stick_{L/K,S,T} \tfwo{\Z[G]}{\sr} X = \lambda^{(\sr)}(\E_{S,T,\sr})$, and we call $\E_{S,T,\sr}$ the group of \emph{rank $\sr$ Stark elements}. The quotient $\rublat{S,T,\sr}/\E_{S,T,\sr}$ is a finite $\Z[G]$-module which will be of significant interest to us.

\begin{remark}
$\E_{S,T,\sr}$ is generated by one element over $\Z[G]$, and if $\eps$ is a generator, then given $x \in \Z[G]$ we have $x \eps = 0$ if and only if $\rke{\sr}x = 0$. This can be deduced from \cite[Lemma 2.6]{rubin:integralstark}, which gives a generator for $\rke{\sr} \tfwo{\Z[G]}{\sr} X$.
\end{remark}

\section{The fractional Galois ideal} \label{sec iJ}

We now come to the definition of the object which we hope will generalize the Stickelberger ideal in its relationship to class-groups.

\begin{definition} \label{def iJ}
Let $(L/K,S,T)$ be any basic triple (recall Definition \ref{def basic triple}) and $G = \gal{L/K}$. Then we define the fractional Galois ideal $\iJ{L/K,S,T}$ to be the set
\[ \stick_{L/K,S,T} \Bigg\{\Det_{\R[G]}(\alpha \of \lin{\lambda}^{-1}) \spc \Bigg| \spc \parbox{6.5cm}{$\alpha \in \Hom_{\Q[G]}(\lin{\oh_{L,S}\st \teno{\Z} \Q},\lin{X \teno{\Z} \Q})$ and $\alpha(\rublat{S,T,\sr}) \con \rke{\sr} \tfwo{\Z[G]}{\sr} X$ for all $\sr \geq 0$} \Bigg\} .\]
\end{definition}

Because $\lin{\oh_{L,S}\st \teno{\Z} \Q}$ and $\lin{X \teno{\Z} \Q}$ are free of rank $1$ over $\Q[G]$, $\iJ{L/K,S,T}$ is a $\Z[G]$-submodule of $\R[G]$. Further, $\iJ{L/K,S,T}$ is finitely generated over $\Z[G]$. We remark that $\iJ{L/K,S,T} \con \Q[G]$ if and only if $L/K$ satisfies Stark's Conjecture, as formulated in \cite{tate:stark}. Since in Section \ref{sec rel to stark} we will be assuming Rubin's Conjecture, which is stronger than that of Stark, $\iJ{L/K,S,T}$ will lie in $\Q[G]$.

\begin{prop} \label{iJ and rke}
For any $\sr \geq 0$, $\rke{\sr} \iJ{L/K,S,T} \con \iJ{L/K,S,T}$.
\end{prop}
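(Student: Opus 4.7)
The plan is as follows. Take an arbitrary generator $\stick_{L/K,S,T} \Det_{\R[G]}(\alpha \of \lin{\lambda}^{-1})$ of $\iJ{L/K,S,T}$ and exhibit a new $\Q[G]$-linear map $\alpha'$ satisfying the hypotheses of Definition \ref{def iJ} such that $\Det_{\R[G]}(\alpha' \of \lin{\lambda}^{-1}) = \rke{\sr} \Det_{\R[G]}(\alpha \of \lin{\lambda}^{-1})$. Since $\rke{\sr} \in \Q[G]$ is central and acts on the codomain, the natural candidate is $\alpha' := \rke{\sr} \alpha$, obtained simply by post-composing $\alpha$ with multiplication by $\rke{\sr}$ on $\lin{X \teno{\Z} \Q}$.

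Before verifying the required properties of $\alpha'$, I would first record the compatibility making the construction coherent: the regulator $\lambda$ is an $\R[G]$-isomorphism, so $\oh_{L,S}\st \teno{\Z} \Q$ and $X \teno{\Z} \Q$ share the same $\Q[G]$-character, and therefore $\fullrke{\oh_{L,S}\st \teno{\Z} \Q}{t} = \fullrke{X \teno{\Z} \Q}{t} = \rke{t}$ for every $t \geq 0$. Consequently $\lin{\oh_{L,S}\st \teno{\Z} \Q}$ and $\lin{X \teno{\Z} \Q}$ decompose compatibly under the orthogonal idempotents $\{\rke{t}\}_{t \geq 0}$. With this in hand, checking $\alpha'(\rublat{S,T,t}) \con \rke{t} \tfwo{\Z[G]}{t} X$ is immediate: by the defining condition on $\rublat{S,T,t}$ the idempotent $\rke{t}$ acts as the identity there, and since $\rke{\sr} \rke{t}$ equals $\rke{\sr}$ when $t = \sr$ and $0$ otherwise, one gets $\alpha'(\rublat{S,T,\sr}) \con \rke{\sr} \tfwo{\Z[G]}{\sr} X$ (using the corresponding containment for $\alpha$) and $\alpha'(\rublat{S,T,t}) = 0$ for $t \ne \sr$, both of which sit inside $\rke{t} \tfwo{\Z[G]}{t} X$.

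The remaining step is the determinant identity $\Det_{\R[G]}(\rke{\sr} \beta) = \rke{\sr} \Det_{\R[G]}(\beta)$ for any endomorphism $\beta$ of the rank $1$ free $\R[G]$-module $\lin{X \teno{\Z} \R}$, applied to $\beta = \alpha \of \lin{\lambda}^{-1}$. This is essentially formal: writing $\beta$ as multiplication by some $b \in \R[G]$ on a free generator, $\rke{\sr} \beta$ is multiplication by $\rke{\sr} b$, and on a rank $1$ free module over a commutative ring the determinant of multiplication by $c$ is $c$ itself, giving $\Det_{\R[G]}(\rke{\sr} \beta) = \rke{\sr} b = \rke{\sr} \Det_{\R[G]}(\beta)$. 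Putting the three steps together yields
\[ \rke{\sr} \stick_{L/K,S,T} \Det_{\R[G]}(\alpha \of \lin{\lambda}^{-1}) = \stick_{L/K,S,T} \Det_{\R[G]}(\alpha' \of \lin{\lambda}^{-1}) \in \iJ{L/K,S,T} . \]
The only real subtlety lies in the orthogonality bookkeeping for the $\rke{t}$ and their interaction with the Rubin lattices; the determinant identity and the final manipulation are routine.
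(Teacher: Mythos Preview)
Your proof is correct and is essentially the same as the paper's: your map $\alpha' = \rke{\sr}\alpha$ is exactly the paper's $\tilde{\alpha}$ (defined componentwise as $\alpha$ on the $\rke{\sr}$-piece and $0$ elsewhere), and the verification of the integrality condition and the determinant identity proceed identically. The only difference is presentational---you describe the modified map via post-composition with the idempotent and make the rank-one determinant computation explicit, whereas the paper defines it by restriction and leaves that computation implicit.
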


\begin{proof}
Take a $\Q[G]$-module homomorphism $\alpha : \lin{\oh_{L,S}\st \teno{\Z} \Q} \ra \lin{X \teno{\Z} \Q}$ such that $\alpha(\rublat{S,T,t}) \con \rke{t} \tfwo{\Z[G]}{t} X$ for all $t \geq 0$. Then defining $\tilde{\alpha} : \lin{\oh_{L,S}\st \teno{\Z} \Q} \ra \lin{X \teno{\Z} \Q}$ by
\[ \tilde{\alpha}|_{\rke{t}} = \left\{
\begin{array}{ll}
\alpha|_{\rke{t}} & \textrm{if $t = \sr$} \\
0 & \textrm{otherwise} ,
\end{array} \right.
\]
one finds that $\tilde{\alpha}$ again satisfies the integrality condition in the definition of $\iJ{L/K,S,T}$, and
\[ \rke{\sr} \stick_{L/K,S,T} \Det_{\R[G]}(\alpha \of \lin{\lambda}^{-1}) = \stick_{L/K,S,T} \Det_{\R[G]}(\tilde{\alpha} \of \lin{\lambda}^{-1}) .\]
\end{proof}

As a consequence of Proposition \ref{iJ and rke}, $\iJ{L/K,S,T}$ decomposes as
\[ \iJ{L/K,S,T} = \bigoplus_\sr \rke{\sr} \iJ{L/K,S,T} .\]
Except for replacing $\oh_{L,S}\st$ by $U_{S,T}$, $\rke{0} \iJ{L/K,S,T}$ and $\rke{1} \iJ{L/K,S,T}$ coincide with $\rke{0} \iJ{L/K,S}$ and $\rke{1} \iJ{L/K,S}$ respectively, where $\iJ{L/K,S}$ is the earlier version of the fractional ideal defined in \cite{buckingham:frac}. Further, as explained in \cite[Prop. 2.8]{buckingham:frac}, $\rke{0} \iJ{L/K,S}$ is essentially the Stickelberger ideal, being generated over $\Z[G]$ by the Stickelberger element.

\subsection{Relationship to Stark elements} \label{sec rel to stark}

We now describe the relationship of $\iJ{L/K,S,T}$ to the (conjectural) Stark elements. Known cases of Rubin's Conjecture, and therefore cases in which the following theorem becomes conditionless, will be discussed in Section \ref{known cases}. We emphasize that the following would not be true for the earlier version of the fractional ideal in \cite{buckingham:frac}.

\begin{theorem} \label{iJ and stark}
Let $(L/K,S,T)$ be a basic triple and $\sr \geq 0$ an integer, and suppose that the hypotheses \rubhyp{1} to \rubhyp{4} are satisfied. If Rubin's Conjecture holds for this data, then
\begin{equation} \label{iJ and stark eq}
\rke{\sr} \iJ{L/K,S,T} = \rke{\sr} \ann_{\Z[G]}(\rublat{S,T,\sr}/\E_{S,T,\sr}) ,
\end{equation}
where $G = \gal{L/K}$.
\end{theorem}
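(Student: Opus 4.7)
The plan is to translate both sides of \eqref{iJ and stark eq} into identical concrete subsets of the ring $R_0 := \rke{\sr}\Z[G]$ sitting inside $\rke{\sr}\Q[G]$, and then to match them directly. By the proof of Proposition \ref{iJ and rke}, every element of $\rke{\sr}\iJ{L/K,S,T}$ has the form $\rke{\sr}\stick_{L/K,S,T}\Det_{\R[G]}(\alpha_\sr \of \lamt_\sr^{-1})$ for some $\Q[G]$-linear map $\alpha_\sr$ between the rank-one free $\rke{\sr}\Q[G]$-modules $\rke{\sr}\lin{\oh_{L,S}\st \teno{\Z} \Q}$ and $\rke{\sr}\lin{X \teno{\Z} \Q}$ satisfying $\alpha_\sr(\rublat{S,T,\sr}) \con \rke{\sr}\tfwo{\Z[G]}{\sr}X$, so the whole calculation reduces to scalar manipulations in $\rke{\sr}\Q[G]$.

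I would then invoke the remark following Conjecture \ref{rubin's conj} (which rests on Rubin's Lemma 2.6) to pick a $\Z[G]$-generator $\eps$ of $\E_{S,T,\sr}$ and a $\Z[G]$-generator $\eta_0$ of $\rke{\sr}\tfwo{\Z[G]}{\sr}X$; both modules are then isomorphic to $R_0$ as $\Z[G]$-modules. Since $\lamt_\sr(\E_{S,T,\sr}) = \rke{\sr}\stick_{L/K,S,T}\tfwo{\Z[G]}{\sr}X$ by Rubin's Conjecture, after adjusting $\eta_0$ by a suitable $R_0$-unit we may assume $\lamt_\sr(\eps) = \rke{\sr}\stick_{L/K,S,T}\cdot\eta_0$. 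Using $\eps$ and $\eta_0$ as $\rke{\sr}\Q[G]$-bases of source and target, both ambient rank-one modules become identified with $\rke{\sr}\Q[G]$: under this identification $\rublat{S,T,\sr}$ corresponds to a full-rank $\Z[G]$-lattice $\Omega \con \rke{\sr}\Q[G]$ containing $R_0$ (the image of $\E_{S,T,\sr}$), $\rke{\sr}\tfwo{\Z[G]}{\sr}X$ corresponds to $R_0$, $\lamt_\sr$ is multiplication by $\rke{\sr}\stick_{L/K,S,T}$, and any admissible $\alpha_\sr$ is multiplication by some $\sigma \in \rke{\sr}\Q[G]$ with $\sigma\Omega \con R_0$. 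A direct computation then yields
\[ \rke{\sr}\stick_{L/K,S,T}\Det_{\R[G]}(\alpha_\sr \of \lamt_\sr^{-1}) = \sigma . \]

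It follows that $\rke{\sr}\iJ{L/K,S,T} = \{\sigma \in \rke{\sr}\Q[G] : \sigma\Omega \con R_0\}$, which automatically lies in $R_0$ because $1 \in \Omega$. Under the same identification $\E_{S,T,\sr} \leftrightarrow R_0$, the right-hand side of \eqref{iJ and stark eq} becomes exactly $\{b \in R_0 : b\Omega \con R_0\}$, and the two descriptions coincide. I expect the main obstacle to be the careful interface between the rational picture (where $\alpha_\sr$ and $\sigma$ live) and the integral order $R_0$ (where the annihilator lives) — in particular, ensuring that $\eps$ and $\eta_0$ can be matched so that $\lamt_\sr(\eps) = \rke{\sr}\stick_{L/K,S,T}\eta_0$ holds on the nose rather than merely up to an $R_0$-unit, and verifying that every $\Z[G]$-linear map $\rublat{S,T,\sr} \to \rke{\sr}\tfwo{\Z[G]}{\sr}X$ arises as the restriction of a $\Q[G]$-linear map between the ambient rank-one modules, both of which rely on $\rublat{S,T,\sr}$ being a full-rank $\Z[G]$-lattice in $\rke{\sr}\lin{\oh_{L,S}\st\teno{\Z}\Q}$.
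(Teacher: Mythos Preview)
Your proposal is correct and follows essentially the same strategy as the paper---using rank-one freeness on the $\rke{\sr}$-component to reduce determinants to scalar multiplication, and Rubin's Conjecture to match $\E_{S,T,\sr}$ with $\rke{\sr}\stick\cdot\tfwo{\Z[G]}{\sr}X$---only more explicitly coordinatized via the generators $\eps$ and $\eta_0$, whereas the paper argues the two inclusions separately without fixing bases. The two ``obstacles'' you flag are already handled by your setup: the first is resolved by replacing $\eta_0$ with a unit multiple as you note, and the second is unnecessary since multiplication by any $\sigma \in \rke{\sr}\Q[G]$ is automatically a $\Q[G]$-linear map on the ambient rank-one module.
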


\begin{proof}
We begin with the observation that if $\phi : M \ra N$ and $\psi : N \ra M$ are $\R[G]$-module homomorphisms, of which at least one is an isomorphism, then $\Det_{\R[G]}(\psi \of \phi) = \Det_{\R[G]}(\phi \of \psi)$. So, take $\alpha \in \Hom_{\Q[G]}(\lin{\oh_{L,S}\st \teno{\Z} \Q},\lin{X \teno{\Z} \Q})$ such that $\alpha(\rublat{S,T,t}) \con \rke{t} \tfwo{\Z[G]}{t} X$ for all non-negative integers $t$. For the purposes of this proof, we shorten $\stick_{L/K,S,T}$ to $\stick$, and also $\rke{\sr} \stick$ to $\stick_\sr$. Then given $u \in \rublat{S,T,\sr}$, $\stick_\sr \Det_{\R[G]}(\alpha \of \lin{\lambda}^{-1}) u = \stick_\sr \Det_{\R[G]}(\lin{\lambda}^{-1} \of \alpha) u$, and this is just $\stick_\sr \lin{\lambda}^{-1} \of \alpha(u)$ because $\lin{\oh_{L,S}\st \teno{\Z} \R}$ is free of rank $1$ over $\R[G]$. However, by the choice of $\alpha$, this lies in
\begin{eqnarray*}
\stick_\sr \lin{\lambda}^{-1}(\rke{\sr} \tfwo{\Z[G]}{\sr} X) &=& \lin{\lambda}^{-1}(\stick_\sr \tfwo{\Z[G]}{\sr} X) \\
&=& \E_{S,T,\sr} .
\end{eqnarray*}
Thus
\begin{equation} \label{eq proof of j to stark}
\stick_\sr \Det_{\R[G]}(\alpha \of \lin{\lambda}^{-1}) \in \{\rke{\sr} x \sat \textrm{$x \in \R[G]$, $x \rublat{S,T,\sr} \con \E_{S,T,\sr}$}\} .
\end{equation}
Now, let $\eps$ be a $\Z[G]$-generator for $\E_{S,T,\sr}$. If $x \in \R[G]$ satisfies $x \rublat{S,T,\sr} \con \E_{S,T,\sr}$, then in particular $x \eps = y \eps$ for some $y \in \Z[G]$, and hence $\rke{\sr} x = \rke{\sr} y$ by the remark following Conjecture \ref{rubin's conj}. Therefore the set in (\ref{eq proof of j to stark}) is equal to $\{\rke{\sr} y \sat \textrm{$y \in \Z[G]$, $y \rublat{S,T,\sr} \con \E_{S,T,\sr}$}\} = \rke{\sr} \ann_{\Z[G]}(\rublat{S,T,\sr}/\E_{S,T,\sr})$. This shows the inclusion ``$\con$''.

For the converse, note that Rubin's Conjecture holding for $\sr$ implies (in particular) that the map $[\stick^{-1}] \of \lambda^{(\sr)} : \rke{\sr} \bwo{\Q[G]}^\sr (\oh_{L,S}\st \teno{\Z} \Q) \ra \rke{\sr} \bwo{\R[G]}^\sr (X \teno{\Z} \R)$ of $\Q[G]$-modules, where $[-]$ denotes multiplication by the given element, in fact defines an isomorphism onto $\rke{\sr} \bwo{\Q[G]}^\sr (X \teno{\Z} \Q)$. Then given $y \in \ann_{\Z[G]}(\rublat{S,T,\sr}/\E_{S,T,\sr})$, consider the map $[y \stick^{-1}] \of \lambda^{(\sr)} : \rke{\sr} \bwo{\Q[G]}^\sr (\oh_{L,S}\st \teno{\Z} \Q) \ra \rke{\sr} \bwo{\Q[G]}^\sr (X \teno{\Z} \Q)$, and extend it arbitrarily to a map $\alpha : \lin{\oh_{L,S}\st \teno{\Z} \Q} \ra \lin{X \teno{\Z} \Q}$. If $u \in \rublat{S,T,\sr}$, then
\begin{eqnarray*}
\alpha(u) &=& \stick^{-1} \lambda^{(\sr)}(yu) \\
&\in& \stick^{-1} \lambda^{(\sr)}(\E_{S,T,\sr}) \\
&=& \rke{\sr} \tfwo{\Z[G]}{\sr} X .
\end{eqnarray*}
Also, noting that $\lin{\lambda}$ restricted to $\rke{\sr} \bwo{\R[G]}^\sr (\oh_{L,S}\st \teno{\Z} \Q)$ defines the same map as $\lambda^{(\sr)}$,
\begin{eqnarray*}
\stick_\sr \Det_{\R[G]}(\alpha \of \lin{\lambda}^{-1}) &=& \stick_\sr \Det_{\R[G]}([y \stick^{-1}] \of \lin{\lambda} \of \lin{\lambda}^{-1}) \\
&=& \rke{\sr} \Det_{\R[G]}([y]) \\
&=& \rke{\sr} y .
\end{eqnarray*}
This shows the inclusion ``$\supseteq$'', completing the proof.
\end{proof}

\subsection{Known cases of Rubin's Conjecture} \label{known cases}

Rubin's Conjecture holds in the following cases, and in these cases Theorem \ref{iJ and stark} will therefore hold unconditionally:

\begin{tabular}{cp{10cm}}
(i) & $\chr(K) > 0$; \\
(ii) & All extensions $L/K$ such that $L/\Q$ is abelian; \\
(iii) & $K$ contains an imaginary quadratic field $k$ of class number one such that $L/k$ is abelian and $\fd{L}{K}$ is odd and divisible only by primes which split in $k/\Q$; \\
(iv) & All abelian extensions $L/K$ where $K$ is imaginary quadratic (arbitrary), in the case $\sr = 1$; \\
(v) & Arbitrary quadratic extensions $L/K$; \\
(vi) & A large class of multiquadratic extensions, in the case $\sr = 1$.
\end{tabular}

\vspace{0.25cm}

In \cite{burns:congruences}, Burns shows that Rubin's Conjecture for the extension $L/K$ follows from Burns and Flach's Equivariant Tamagawa Number Conjecture (ETNC) for the pair $(h^0(\spec(L)),\Z[\gal{L/K}])$. The original formulation of the ETNC, namely \cite[Conj.4(iv)]{bf:tamagawa}, is very technical, but in the case of the pair $(h^0(\spec(L)),\Z[\gal{L/K}])$, a more explicit equivalent conjecture is Conjecture $C(L/K)$ of \cite{burns:congruences}. We remark that Conjecture $C(L/K)$ has the following base change property for abelian extensions $L/K$ and $L/K'$ with $K \con K'$, which is \cite[Prop.4.1(a)]{bf:tamagawa}:
\begin{equation}
\textrm{If $C(L/K)$ holds, then $C(L/K')$ holds.}
\end{equation}

Case (i) in the above list was done in part by Popescu -- see \cite{popescu:rubin} -- and completed for arbitrary global function fields by Burns in \cite{burns:ffcongruences}. Case (ii) follows from Burns and Greither's proof in \cite{bg:equiv} of the ETNC for the pair $(h^0(\spec(L)),\Z[\gal{L/\Q}])$ when $L$ is cyclotomic, together with the aforementioned base-change property. Case (iii) is due to Bley's work in \cite{bley:quadim} on the ETNC for imaginary quadratic fields of class-number one (see \cite{bley:quadim} for a precise description of the known cases). (iv) can be found in \cite{stark:iv}, where the rank one version of the conjecture, which Rubin's Conjecture was based on, was first stated. (v) is proven in \cite{rubin:integralstark} itself. For details of (vi), see \cite{dst:multiquad}.

\section{The fractional Galois ideal and class-groups} \label{sec class-groups}

We now explain how to describe a link between $\iJ{L/K,S,T}$ and class-groups. Here, $\class(L)_{S,T}$ will denote the $S_L$-ray class-group modulo $T_L$, namely
\[ \class(L)_{S,T} = \frac{\{\textrm{fractional ideals of $\oh_{L,S}$ prime to $T_L$}\}}{\{f \oh_{L,S} \sat f \equiv 1 \Mod w, \forall w \in T_L\}} .\]

\begin{prop} \label{fitt}
Let $(L/K,S,T)$ be a basic triple and $\sr \geq 0$ an integer, and suppose that hypotheses \rubhyp{1} to \rubhyp{4} are met. If Conjecture \ref{rubin's conj} holds for the triple $(L/K,S',T)$ and the integer $\sr$ for all sets $S'$ satisfying \rubhyp{1} to \rubhyp{3}, then
\begin{eqnarray*}
\zinv{g} \rke{\sr} \iJ{L/K,S,T} &=& \zinv{g} \rke{\sr} \fitt_{\Z[G]}(\class(L)_{S,T}) 
\end{eqnarray*}
where $g = \fd{L}{K}$.
\end{prop}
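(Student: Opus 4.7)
The plan is to combine Theorem \ref{iJ and stark} with a Fitting-ideal computation ultimately due to Rubin and Popescu.

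First I would apply Theorem \ref{iJ and stark}, which, since Rubin's Conjecture is assumed in particular for the data $(L/K,S,T)$ and $\sr$, gives
\[
\rke{\sr}\iJ{L/K,S,T} \spc = \spc \rke{\sr}\ann_{\Z[G]}(\rublat{S,T,\sr}/\E_{S,T,\sr}).
\]
After inverting $g$, it therefore suffices to prove
\[
\zinv{g}\rke{\sr}\ann_{\Z[G]}(\rublat{S,T,\sr}/\E_{S,T,\sr}) \spc = \spc \zinv{g}\rke{\sr}\fitt_{\Z[G]}(\class(L)_{S,T}).
\]

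Next I would argue that, on the $\rke{\sr}$-part and after inverting $g$, the annihilator of $\rublat{S,T,\sr}/\E_{S,T,\sr}$ coincides with its Fitting ideal, by establishing cyclicity component by component. Decomposing into $\chi$-isotypic parts for $\chi\in\chgrp{G}$ with $r(\chi)=\sr$, the module $e_\chi\rublat{S,T,\sr}$ sits inside the one-dimensional $e_\chi\Q[G]$-module $e_\chi\bwo{\Q[G]}^\sr(\oh_{L,S}\st\teno{\Z}\Q)$, and hence becomes an invertible (fractional) ideal over the $\zinv{g}$-order $e_\chi\zinv{g}[G]$; meanwhile $e_\chi\E_{S,T,\sr}$ is generated by a single Stark element, by the remark following Conjecture \ref{rubin's conj}. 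Therefore each $\chi$-component of $\rublat{S,T,\sr}/\E_{S,T,\sr}$ is cyclic over $e_\chi\zinv{g}[G]$, so $\ann=\fitt$ there, and summing over characters yields the equality of the annihilator and Fitting ideal of $\rublat{S,T,\sr}/\E_{S,T,\sr}$ after multiplication by $\rke{\sr}\zinv{g}$.

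Finally, the key input: I would invoke a result of Rubin, in the form extended to higher rank by Popescu, which—precisely under the varying-$S'$ hypothesis of the proposition—produces an identification of $\zinv{g}[G]$-modules
\[
\zinv{g}\rke{\sr}(\rublat{S,T,\sr}/\E_{S,T,\sr}) \spc \cong \spc \zinv{g}\rke{\sr}\class(L)_{S,T},
\]
obtained by comparing Stark elements for varying enlargements of $S$ (whose index in the Rubin lattice is controlled by the analytic class number formula applied to each character). Passing to Fitting ideals then completes the proof.

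The main obstacle is the last step: extracting from the Rubin--Popescu machinery an honest module-level isomorphism rather than a mere equality of orders or characteristic ideals. The varying-$S'$ hypothesis is exactly what is needed to assemble the class group itself out of Rubin-lattice quotients, and a careful formulation of the Popescu result in the present notation, together with verifying that the bounded error terms it produces are all killed by inverting $g$, is where the real technical work lies.
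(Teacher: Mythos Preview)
Your first two steps are fine, and in fact step 2 (that $\ann=\fitt$ for $\rublat{S,T,\sr}/\E_{S,T,\sr}$ after inverting $g$) is correct for the reason you give: on each Dedekind-domain component the Rubin lattice is invertible of rank one, so the quotient by the cyclic Stark submodule is locally cyclic. The genuine gap is step 3. The Rubin--Popescu input you need does \emph{not} come as a module isomorphism
\[
\zinv{g}\rke{\sr}(\rublat{S,T,\sr}/\E_{S,T,\sr})\;\cong\;\zinv{g}\rke{\sr}\class(L)_{S,T},
\]
and in general no such isomorphism exists: on a component $R_i$ the class-group piece can fail to be cyclic (say $R_i/\p\oplus R_i/\p$), while $\rublat{S,T,\sr}/\E_{S,T,\sr}$ is always locally cyclic on $R_i$, so the two modules can have the same Fitting ideal without being isomorphic. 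What Rubin (\cite[Cor.~5.4]{rubin:integralstark}) and Popescu (\cite[Cor.~3.2.2]{popescu:refined}) actually prove, under precisely the varying-$S'$ hypothesis, is the multiplicative relation
\[
\zinv{g}\,\E_{S,T,\sr}\;=\;\zinv{g}\,\fitt_{\Z[G]}(\class(L)_{S,T})\cdot\rublat{S,T,\sr}.
\]
This already contains $\fitt_{\Z[G]}(\class(L)_{S,T})$ explicitly; no reconstruction of the class-group as a module is required or possible from it.

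The paper closes the argument with an elementary lemma (Lemma~\ref{fitt lemma}): if $R$ splits as a product of Dedekind domains, $M$ is componentwise torsion-free, $N\subseteq M$ is cyclic, and $IM=N$, then $eI=e\,\ann_R(M/N)$ for the idempotent $e$ supported where $M\neq 0$. Applying this with $R=\zinv{g}[G]$, $M=\zinv{g}\rublat{S,T,\sr}$, $N=\zinv{g}\E_{S,T,\sr}$, $I=\zinv{g}\fitt_{\Z[G]}(\class(L)_{S,T})$ and $e=\rke{\sr}$ gives
\[
\zinv{g}\rke{\sr}\fitt_{\Z[G]}(\class(L)_{S,T})\;=\;\zinv{g}\rke{\sr}\ann_{\Z[G]}(\rublat{S,T,\sr}/\E_{S,T,\sr}),
\]
and then Theorem~\ref{iJ and stark} finishes as you wrote. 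So replace your sought-for module isomorphism by the multiplicative identity above plus this short ideal-recovery lemma; your step 2 then becomes unnecessary, and the ``main obstacle'' you flagged disappears.
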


In Proposition \ref{fitt}, $\fitt_{\Z[G]}(\class(L)_{S,T})$ is the Fitting ideal of $\class(L)_{S,T}$. More generally, if $R$ is a commutative ring and $M$ a finitely presentable $R$-module, $\fitt_R(M)$ is a canonical ideal of annihilators of $M$. Further, if $M$ can be generated by a set consisting of $n$ elements, then
\[ \ann_R(M)^n \con \fitt_R(M) \con \ann_R(M) .\]
The reader wishing to know more about Fitting ideals may consult \cite{northcott:ffr}.

Before proving Proposition \ref{fitt}, we give a lemma.

\begin{lemma} \label{fitt lemma}
Suppose $R$ is a commutative ring and $e_1,\ldots,e_n$ are mutually orthogonal idempotents in $R$ whose sum is $1$ and such that $Re_i$ is a Dedekind domain for all $i$. Let $M$ be an $R$-module such that $e_i M$ is $Re_i$-torsion free for all $i$, and let $N$ be a cyclic $R$-submodule of $M$. Finally, let $e$ be the sum of the idempotents $e_i$ such that $e_i M \not= 0$. Then if $I$ is an ideal in $R$ such that $IM = N$, we have
\[ eI = e \cdot \ann_R(M/N) .\]
\end{lemma}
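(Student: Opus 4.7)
My plan is to use the orthogonal idempotents $e_1, \ldots, e_n$ to reduce the problem to a statement about a single Dedekind domain. Since $R = \bigoplus_i Re_i$, the modules $M$, $N$, $I$ split compatibly as $M = \bigoplus_i e_i M$ etc., and one checks routinely that $e_i \cdot \ann_R(M/N) = \ann_{Re_i}(e_i M / e_i N)$ while the hypothesis $IM = N$ gives $(e_i I)(e_i M) = e_i N$ for each $i$. Hence the desired equality $eI = e \cdot \ann_R(M/N)$ reduces to the single-component claim $e_i I = \ann_{Re_i}(e_i M / e_i N)$ for each $i$ with $e_i M \neq 0$.

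I would thus fix such an $i$ and set $A = Re_i$, $V = e_i M$, $W = e_i N$, $J = e_i I$. Note that $W$ is cyclic over $A$, since $N$ is cyclic over $R$, and $V$ is torsion-free by hypothesis. The inclusion $J \subseteq \ann_A(V/W)$ is automatic from $JV = W$. The reverse inclusion is trivial in the degenerate case $W = 0$: then $JV = 0$ together with $V \neq 0$ being torsion-free over the domain $A$ forces $J = 0 = \ann_A(V)$.

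Suppose now $W = Aw$ with $w \neq 0$. Since $V$ is torsion-free, $w$ is a nontorsion element, so for each $a \in \ann_A(V/W)$ there is a well-defined $A$-linear map $f_a \colon V \to A$ characterized by $av = f_a(v)\,w$. The key step is to evaluate this relation at $w$ itself: using $W = JV$, write $w = \sum_k j_k v_k$ with $j_k \in J$ and $v_k \in V$; then
$aw = \sum_k j_k(av_k) = \bigl(\sum_k j_k f_a(v_k)\bigr) w$, and cancelling the nontorsion element $w$ gives $a = \sum_k j_k f_a(v_k) \in J$, as required.

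The reduction via idempotents is purely formal, and the only real content lies in the evaluation trick of the final paragraph. I do not anticipate a significant obstacle; in fact, the Dedekind-domain hypothesis is used only through the weaker fact that each $Re_i$ is an integral domain, which is what makes $w$ a nontorsion generator and lets $f_a$ be defined.
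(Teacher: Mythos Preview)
Your proof is correct, and in fact slightly sharper than the paper's. The reduction via the idempotents to a single component $A = Re_i$ is identical in both arguments, as is the treatment of the degenerate case $W = 0$. The difference lies in how the reverse inclusion $\ann_A(V/W) \subseteq J$ is obtained when $W = Aw \neq 0$. The paper embeds $V$ into $V \otimes_A F$ for $F$ the fraction field of $A$, observes that $JV = W$ with $J$ nonzero and $A$ Dedekind gives $V = J^{-1}W = \{rw : r \in J^{-1}\}$, and then deduces from $aV \subseteq W$ that $aJ^{-1} \subseteq A$, i.e.\ $a \in (J^{-1})^{-1} = J$; this last equality genuinely uses invertibility of nonzero ideals and hence the Dedekind hypothesis. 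Your evaluation trick --- defining $f_a : V \to A$ by $av = f_a(v)w$ and reading off $a = f_a(w) = \sum_k j_k f_a(v_k) \in J$ from an expression $w = \sum_k j_k v_k$ --- bypasses fractional ideals entirely and needs only that $A$ be an integral domain (so that the nonzero element $w$ is nontorsion and $f_a$ is well-defined). So you recover the lemma under a weaker hypothesis, at the cost of being marginally less conceptual than the fractional-ideal picture.
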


\begin{proof}
It is straightforward to reduce to the case $n = 1$ and $M \not= 0$. So, suppose $IM = N$. If $N = 0$, then the statement is clear since then $I = 0$ also and so $\ann_R(M/N) = \ann_R(M) = 0 = I$. Now suppose that $N$ is non-zero. That $I \con \ann_R(M/N)$ is immediate. For the reverse inclusion, first note that since $M$ is $R$-torsion free, it embeds naturally into $M \teno{R} F$ where $F$ is the fraction field of $R$. The same consequently holds for $N$. Therefore $M = I^{-1}N = \{r b \sat r \in I^{-1}\}$, choosing a generator $b$ of $N$. Given $s \in \ann_R(M/N)$ and $r \in I^{-1}$, $srb \in N$ and so is equal to $t b$ for some $t \in R$. But then, since $N$ is generated freely by $b \not= 0$, $sr = t \in R$. Thus any $s \in \ann_R(M/N)$ satisfies $s I^{-1} \con R$, showing that $\ann_R(M/N) \con (I^{-1})^{-1} = I$.
\end{proof}

\begin{proof}[Proposition \ref{fitt}]
If $K$ is a function field, then by \cite[Cor. 3.2.2]{popescu:refined},
\begin{equation} \label{rubin/popescu}
\zinv{g} \E_{S,T,\sr} = \zinv{g} \fitt_{\Z[G]}(\class(L)_{S,T}) \rublat{S,T,\sr} .
\end{equation}
If instead $K$ is a number field, then by our assumption concerning Conjecture \ref{rubin's conj} in this case, \cite[Cor. 5.4]{rubin:integralstark} provides (\ref{rubin/popescu}).

Using Lemma \ref{fitt lemma}, we deduce from (\ref{rubin/popescu}) that
\begin{equation} \label{fitt eq}
\zinv{g} \rke{\sr} \fitt_{\Z[G]}(\class(L)_{S,T}) = \rke{\sr} \ann_{\zinv{g}[G]}(\zinv{g} \rublat{S,T,\sr}/\zinv{g} \E_{S,T,\sr}) .
\end{equation}
To see that the ring $\zinv{g}[G]$ satisfies the hypotheses of Lemma \ref{fitt lemma}, we refer the reader to \cite[Section 1.3]{popescu:refined}, for example. The torsion-freeness hypothesis is met because, writing $\zinv{g}[G]$ as the direct sum of Dedekind domains $R_i$, all modules involved are contained in one of the modules $R_i \lin{\oh_{L,S}\st \teno{\Z} \Q}$, which is isomorphic (as an $R_i$-module) to the fraction field of $R_i$. The cyclicity is a consequence of the remark following Conjecture \ref{rubin's conj}.

Since $\rublat{S,T,\sr}$ is finitely generated over $\Z[G]$,
\[ \ann_{\zinv{g}[G]}(\zinv{g} \rublat{S,T,\sr}/\zinv{g} \E_{S,T,\sr}) = \zinv{g} \ann_{\Z[G]}(\rublat{S,T,\sr}/\E_{S,T,\sr}) .\]
Combining this with (\ref{fitt eq}), we therefore have
\begin{equation} \label{class and stark}
\zinv{g} \rke{\sr} \fitt_{\Z[G]}(\class(L)_{S,T}) = \zinv{g} \rke{\sr} \ann_{\Z[G]}(\rublat{S,T,\sr}/\E_{S,T,\sr}) ,
\end{equation}
and the right-hand side of (\ref{class and stark}) is \textrm{$\zinv{g} \rke{\sr} \iJ{L/K,S,T}$ by Theorem \ref{iJ and stark}}.
\end{proof}

\section{Stark elements and class-groups} \label{sec buy}

We now take a setup inspired by \cite{buyukboduk:kolofstark}: $K$ is a totally real field of degree $\sr$ over $\Q$, $L/K$ a non-trivial totally real cyclic extension with Galois group $G$, $p$ a prime not dividing $\fd{L}{K}$ and such that no prime of $K$ above $p$ ramifies in $L/K$, and $\chi : G \ra \br{\Q}_p\st$ a faithful character. We also view $\br{\Q}_p$ as a subfield of $\C$ by fixing an isomorphism $\Cp \ra \C$. The setup of \cite{buyukboduk:kolofstark} further demands that the sets $S$ and $T$ be chosen as follows: $S$ is any finite set of places of $K$ containing the $\sr$ infinite ones and the ones which ramify in $L/K$, and assume that $S$ contains at least $\sr + 1$ places but no places above $p$ and no finite places that split completely in $L/K$. Such an $S$ can always be chosen. $T$ is any finite set of places of $K$ disjoint from $S$ and such that \rubhyp{4} is satisfied (which in our case happens whenever $T$ contains a finite prime not above $2$ since $L$ is real), and such that $p$ does not divide $\absnm \pp - 1$ for any place $\pp$ of $L$ above $T$. For example, the set $T$ consisting of the places of $K$ above $p$ will satisfy all the conditions we require.

From here on, $\class(L)$ will denote the ordinary class-group of $L$, \ie the class-group of the Dedekind domain $\oh_L$.

In the main theorem of \cite{buyukboduk:kolofstark}, B\"uy\"ukboduk proves the equality (\ref{buy equality}) appearing in Corollary \ref{cor buy different assumption} below under the assumption not that Rubin's Conjecture hold for a fixed $L/K$ and varying $S'$, but that it hold for a fixed set of places and varying extensions $K'/K$. The machinery used in \cite{buyukboduk:kolofstark} involves the theory of Kolyvagin systems. We include Theorem \ref{buy different assumption} and its corollary because of the comparative ease with which they can be proven under our assumption (varying $S'$) as opposed to that of \cite{buyukboduk:kolofstark} (varying $K'/K$). We emphasize that aside from this, all other assumptions we make are also made in \cite{buyukboduk:kolofstark}.

Let $R_\chi = \Zp[\chi]$, the valuation ring of $\Qp(\chi)$. We also let $A_\chi = e[\chi] \Zp[G]$, where $e[\chi]$ is the sum of the idempotents $e_\psi$ with $\psi$ in the orbit of $\chi$ under the action of $\abqp$. Note that $e[\chi] \in \Zp[G]$ and $A_\chi$ is isomorphic to $R_\chi$. Part (i) of Theorem \ref{buy different assumption} allows us, once we extend scalars to $R_\chi$, to deal with the character $\chi$ individually. The benefit of part (ii), on the other hand, is that by considering all characters in the orbit of $\chi$, we need only extend scalars to $\Zp$.

\begin{theorem} \label{buy different assumption}
Suppose that for all sets $S'$ satisfying \rubhyp{1} to \rubhyp{3}, Rubin's Conjecture is true for the basic triple $(L/K,S',T)$ and the integer $\sr$. This happens, for example, if $L/\Q$ is abelian. Let $\eps$ be a $\Z[G]$-generator for $\E_{S,T,\sr}$.

(i) The element $\eps^\chi = e_\chi(\eps \ten 1) \in e_\chi (\tfwo{\Z[G]}{\sr} U_{S,T}) \teno{\Z} R_\chi$ can be viewed as an element of $e_\chi \tfwo{R_\chi[G]}{\sr} (\oh_L\st \teno{\Z} R_\chi)$, and
\begin{equation} \label{fitt equality}
\fitt_{R_\chi}(e_\chi(\class(L) \teno{\Z} R_\chi)) = \fitt_{R_\chi}(e_\chi \tfwo{R_\chi[G]}{\sr} (\oh_L\st \teno{\Z} R_\chi)/R_\chi \eps^\chi) .
\end{equation}

(ii) The element $\eps^{[\chi]} = e[\chi](\eps \ten 1) \in e[\chi] (\tfwo{\Z[G]}{\sr} U_{S,T}) \teno{\Z} \Zp$ can be viewed as an element of $e[\chi] \tfwo{\Zp[G]}{\sr} (\oh_L\st \teno{\Z} \Zp)$, and
\[ \fitt_{A_\chi}(e[\chi] \class(L) \teno{\Z} \Zp) = \fitt_{A_\chi}(e[\chi] \tfwo{\Zp[G]}{\sr} (\oh_L\st \teno{\Z} \Zp)/A_\chi \eps^{[\chi]}) .\]
\end{theorem}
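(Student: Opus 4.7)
The argument leverages the key relation $\zinv{g} \E_{S,T,\sr} = \zinv{g} \fitt_{\Z[G]}(\class(L)_{S,T}) \rublat{S,T,\sr}$ established as equation (\ref{rubin/popescu}) during the proof of Proposition \ref{fitt}, which is available under our standing hypothesis on Rubin's Conjecture. I multiply by $e_\chi$ and base change to $R_\chi$. Since $p \nmid g = \fd{L}{K}$, the denominator $g$ is already invertible in $R_\chi$, so the localisation $\zinv{g}$ becomes superfluous. Using the faithfulness of $\chi$ on the non-trivial cyclic group $G$, the totally real hypothesis (which forces $D_v = 1$ at every infinite $v$), and the absence of finite places in $S$ splitting completely in $L/K$, one reads off from the formula $r(\chi) = \#\{v \in S : \chi|_{D_v} = 1\}$ that $r(\chi) = \sr$, hence $e_\chi \rke{\sr} = e_\chi$ automatically selects the correct graded piece.

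Next I pass from the $(S,T)$-decorated objects to their classical counterparts after applying $e_\chi \otimes R_\chi$. The standard exact sequences
\[ 0 \to U_{S,T} \to \oh_{L,S}\st \to \prod_{w | T_L} k(w)\st \to \class(L)_{S,T} \to \class(L)_S \to 0 \]
and
\[ 0 \to \oh_L\st \to \oh_{L,S}\st \to \bigoplus_{v \in \sfin \cap S} \Z[G/D_v] \to \class(L) \to \class(L)_S \to 0 \]
have error terms of two types. The residue-field groups $\prod k(w)\st$ have order coprime to $p$ by the hypothesis $p \nmid \absnm\pp - 1$; and the permutation modules $\Z[G/D_v]$, each having $D_v \neq 1$ by the no-splitting assumption, are killed by $e_\chi$ because $\chi$ is faithful on a cyclic group and so non-trivial on every non-trivial subgroup. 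The torsion $\mu_L = \{\pm 1\}$ in $\oh_L\st$ is a trivial $G$-module, which $e_\chi$ kills since $\chi \neq \mathbf{1}$. Therefore the natural maps induce isomorphisms $e_\chi(U_{S,T} \otimes R_\chi) \cong e_\chi(\oh_L\st \otimes R_\chi)$ and $e_\chi(\class(L)_{S,T} \otimes R_\chi) \cong e_\chi(\class(L) \otimes R_\chi)$, and the parallel analysis on wedge powers yields $e_\chi(\rublat{S,T,\sr} \otimes R_\chi) \cong e_\chi \tfwo{R_\chi[G]}{\sr}(\oh_L\st \otimes R_\chi)$; this latter identification simultaneously certifies that $\eps^\chi$ lands in the asserted ambient module.

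Carrying (\ref{rubin/popescu}) through these identifications, and using that Fitting ideals respect base change and central-idempotent decomposition, one obtains $R_\chi \eps^\chi = \fitt_{R_\chi}(e_\chi \class(L) \otimes R_\chi) \cdot e_\chi \tfwo{R_\chi[G]}{\sr}(\oh_L\st \otimes R_\chi)$. The ambient module $e_\chi \tfwo{R_\chi[G]}{\sr}(\oh_L\st \otimes R_\chi)$ is $R_\chi$-free of rank $1$: by Dirichlet's unit theorem combined with the totally real hypothesis, the multiplicity of $\chi$ in $\oh_L\st \teno{\Z} \C$ equals $\sr$, so $e_\chi(\oh_L\st \otimes R_\chi)$ is torsion-free of $R_\chi$-rank $\sr$ (whence free over the DVR $R_\chi$), and its $\sr$-th exterior power over $e_\chi R_\chi[G] = R_\chi$ is free of rank $1$. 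Lemma \ref{fitt lemma} now applies (with the single Dedekind domain $R_\chi$) and gives $\fitt_{R_\chi}(e_\chi \class(L) \otimes R_\chi) = \ann_{R_\chi}(e_\chi \tfwo{R_\chi[G]}{\sr}(\oh_L\st \otimes R_\chi)/R_\chi \eps^\chi)$. Because the quotient is cyclic over the DVR $R_\chi$, its Fitting ideal coincides with its annihilator, and (\ref{fitt equality}) follows.

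Part (ii) is the same argument with $e_\chi$ replaced by $e[\chi]$, $R_\chi$ by $A_\chi$, and scalars extended only to $\Z_p$: since $e[\chi] = \sum_\sigma e_{\sigma \chi}$ already lies in $\Zp[G]$, every Galois conjugate of $\chi$ is itself faithful, and the ring isomorphism $A_\chi \iso R_\chi$ transports the entire computation back to the DVR setting of part (i). The main obstacle throughout is the bookkeeping of the middle paragraph, where one must verify that the integral structures $\rublat{S,T,\sr}$ and $\tfwo{\Z[G]}{\sr}(\oh_L\st)$ genuinely become equal after $e_\chi \otimes R_\chi$, rather than merely one containing the other, so that the Fitting ideal decomposition above is a true equality and not only a divisibility.
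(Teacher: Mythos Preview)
Your proof is correct and follows essentially the same route as the paper's. The paper cites the intermediate equality (\ref{class and stark}) rather than going back to (\ref{rubin/popescu}) directly, but this is only a matter of packaging, since (\ref{class and stark}) was itself obtained from (\ref{rubin/popescu}) via Lemma~\ref{fitt lemma}. The one place where the paper is more explicit than you is the identification $e_\chi(\rublat{S,T,\sr} \teno{\Z} R_\chi) = e_\chi((\tfwo{\Z[G]}{\sr} U_{S,T}) \teno{\Z} R_\chi)$, which you flag as ``the main obstacle'' but do not actually pin down: the paper disposes of it via the standard sandwich $e_\chi \tfwo{\Z[G]}{\sr} U_{S,T} \con e_\chi \rublat{S,T,\sr} \con e_\chi \frac{1}{\fd{L}{K}} \tfwo{\Z[G]}{\sr} U_{S,T}$ together with $\fd{L}{K} \in \Zp\st$, after which your exact-sequence identifications with $\oh_L\st$ and $\class(L)$ match the paper's step for step.
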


\begin{proof}
Let us deal with part (i). The claim about $\eps^\chi$ will be proven along the way. Now, by the choice of $T$, $\class(L)_{S,T} \teno{\Z} \Zp = \class(L)_S \teno{\Z} \Zp$, where $\class(L)_S$ is the $S_L$-class-group of $L$. Further, using \cite[Prop. 11.6]{neukirch:alg} gives that $e_\chi \class(L)_S \teno{\Z} R_\chi = e_\chi \class(L) \teno{\Z} R_\chi$, so that the left-hand side of (\ref{fitt equality}) becomes
\begin{equation} \label{buy step' 2}
\fitt_{R_\chi}(e_\chi(\class(L)_{S,T} \teno{\Z} R_\chi)) = \chi(e_\chi \fitt_{R_\chi[G]}(\class(L)_{S,T} \teno{\Z} R_\chi)) .
\end{equation}

We now appeal to (\ref{class and stark}), obtaining that the right-hand side of (\ref{buy step' 2}) is equal to
\begin{eqnarray}
& & \chi(e_\chi \ann_{R_\chi[G]}((\rublat{S,T,\sr}/\E_{S,T,\sr}) \teno{\Z} R_\chi)) \nonumber \\
&=& \ann_{R_\chi}(e_\chi(\rublat{S,T,\sr} \teno{\Z} R_\chi/\E_{S,T,\sr} \teno{\Z} R_\chi))) \label{buy step' 3} .
\end{eqnarray}
Since $\fd{L}{K} \in \Zp\st$ and 
\[ e_\chi \tfwo{\Z[G]}{\sr} U_{S,T} \con e_\chi \rublat{S,T,\sr} \con e_\chi \frac{1}{\fd{L}{K}} \tfwo{\Z[G]}{\sr} U_{S,T} ,\]
(\ref{buy step' 3}) is therefore equal to
\begin{equation} \label{buy step' 4}
\ann_{R_\chi}(e_\chi((\tfwo{\Z[G]}{\sr} U_{S,T}) \teno{\Z} R_\chi/\E_{S,T,\sr} \teno{\Z} R_\chi)) .
\end{equation}

Now, $e_\chi \big(\tfwo{\Z[G]}{\sr} U_{S,T}\big) \teno{\Z} R_\chi \can \tfwo{R_\chi[G]}{\sr} e_\chi(U_{S,T} \teno{\Z} R_\chi)$, and by the choice of $T$, $U_{S,T} \teno{\Z} R_\chi = \oh_{L,S}\st \teno{\Z} R_\chi$. Also, $e_\chi \E_{S,T,\sr} \teno{\Z} R_\chi = R_\chi \eps^\chi$ since $\sigma \in G$ acts as $\chi(\sigma) \in R_\chi\st$ on $e_\chi \E_{S,T,\sr} \teno{\Z} R_\chi$. We therefore obtain that (\ref{buy step' 4}) is equal to
\begin{equation} \label{buy step' 5}
\ann_{R_\chi}(\tfwo{R_\chi[G]}{\sr} e_\chi(\oh_{L,S}\st \teno{\Z} R_\chi)/R_\chi \eps^\chi) .
\end{equation}

Since the $S$-rank of $\chi$ and the $S_\infty$-rank of $\chi$ are both the same by \cite[Ch.I, Prop.3.4]{tate:stark}, where $S_\infty$ is the set of infinite places of $K$, $\rk_{R_\chi} e_\chi (\oh_{L,S}\st \teno{\Z} R_\chi) = \rk_{R_\chi} e_\chi(\oh_L\st \teno{\Z} R_\chi)$. Using again the exact sequence in \cite[Prop.11.6]{neukirch:alg} we find that $\oh_{L,S}\st/\oh_L\st$ is torsion-free, therefore so is $e_\chi (\oh_{L,S}\st \teno{\Z} R_\chi)/e_\chi (\oh_L\st \teno{\Z} R_\chi)$. But this quotient has $R_\chi$-rank zero by the above, and hence is trivial, \ie
\[ e_\chi(\oh_{L,S}\st \teno{\Z} R_\chi) = e_\chi(\oh_L\st \teno{\Z} R_\chi) .\]
Thus (\ref{buy step' 5}) is
\begin{equation} \label{buy step' 6}
\ann_{R_\chi}(e_\chi \tfwo{R_\chi[G]}{\sr} (\oh_L\st \teno{\Z} R_\chi)/R_\chi \eps^\chi) .
\end{equation}
Since $e_\chi \tfwo{R_\chi[G]}{\sr} (\oh_L\st \teno{\Z} R_\chi)$ is torsion-free and the quotient by $R_\chi \eps^\chi$ is finite, this quotient must be cyclic as an $R_\chi$-module, so that its annihilator ideal equals its Fitting ideal. Hence we obtain that (\ref{buy step' 6}) is equal to $\fitt_{R_\chi}(e_\chi \tfwo{R_\chi[G]}{\sr} (\oh_L\st \teno{\Z} R_\chi)/R_\chi \eps^\chi)$, as required.

Addressing part (ii), we remark that almost all of its proof is identical to that of part (i). In fact, it is even simpler because we only need to extend scalars to $\Zp$, and, aside from the details already given in the proof of part (i), it uses only the behaviour of Fitting ideals and annihilators when applying idempotents.
\end{proof}

\begin{cor} \label{cor buy different assumption}
If, in addition to the assumptions of Theorem \ref{buy different assumption}, we suppose that $p \equiv 1 \Mod \fd{L}{K}$ (as assumed in \cite{buyukboduk:kolofstark}), then $\chi$ takes its values in $\Zp$ and
\begin{equation} \label{buy equality}
|e_\chi \class(L) \teno{\Z} \Zp| = |e_\chi \tfwo{\Zp[G]}{\sr} (\oh_L\st \teno{\Z} \Zp):\Zp \eps^\chi| .
\end{equation}
\end{cor}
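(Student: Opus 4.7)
The plan is to derive this corollary as a direct specialization of Theorem \ref{buy different assumption}(i), once we identify $R_\chi$ with $\Zp$ and pass from Fitting ideals of finite $\Zp$-modules to their orders.

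First, I would verify the claim that $\chi$ takes values in $\Zp$. Since $G$ is cyclic of order $\fd{L}{K}$ and $\chi$ is faithful, the image of $\chi$ is the cyclic group of $\fd{L}{K}$-th roots of unity in $\br{\Q}_p\st$. The assumption $p \equiv 1 \Mod \fd{L}{K}$ says that $\fd{L}{K}$ divides $p-1$, so the Teichm\"uller lift (equivalently, Hensel's lemma applied to $x^{\fd{L}{K}} - 1$) produces a cyclic subgroup of $\Zp\st$ of order $\fd{L}{K}$ containing every $\fd{L}{K}$-th root of unity of $\br{\Q}_p\st$. Hence $\chi$ has image in $\Zp\st$ and $R_\chi = \Zp[\chi] = \Zp$.

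With this identification in hand, Theorem \ref{buy different assumption}(i) becomes the equality of ideals
\[ \fitt_{\Zp}(e_\chi(\class(L) \teno{\Z} \Zp)) = \fitt_{\Zp}(e_\chi \tfwo{\Zp[G]}{\sr} (\oh_L\st \teno{\Z} \Zp)/\Zp \eps^\chi) \]
inside the discrete valuation ring $\Zp$. Both sides are the Fitting ideals of finite $\Zp$-modules: the left-hand module is a direct summand of the finite group $\class(L) \teno{\Z} \Zp$, and the right-hand module is finite by the argument appearing at the end of the proof of Theorem \ref{buy different assumption}(i), where torsion-freeness of the ambient module together with finiteness of the quotient were used to conclude cyclicity over $R_\chi$.

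The argument is then completed by the standard observation that for any finite $\Zp$-module $M$, the Fitting ideal $\fitt_{\Zp}(M)$ is principal and generated by $|M|$ (this follows immediately from decomposing $M$ as a direct sum of cyclic $p$-power modules). Equality of Fitting ideals therefore forces equality of orders, which is precisely (\ref{buy equality}). I do not expect any genuine obstacle here: once the coefficient ring is recognized as $\Zp$, the corollary reduces to a routine cardinality comparison via Fitting ideals over a DVR, and the whole content of the statement is already packaged inside Theorem \ref{buy different assumption}(i).
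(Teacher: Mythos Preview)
Your proposal is correct and follows essentially the same route as the paper: identify $R_\chi$ with $\Zp$ via the hypothesis $p \equiv 1 \Mod \fd{L}{K}$, apply Theorem \ref{buy different assumption}(i), and convert the resulting equality of Fitting ideals over $\Zp$ into an equality of orders. The paper's proof is terser but argues identically.
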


\begin{proof}
The assertion about $\chi$ holds because $\Zp$ contains the $(p-1)$th roots of unity. Since $e_\chi \class(L) \teno{\Z} \Zp$ and $e_\chi \tfwo{\Zp[G]}{\sr} (\oh_L\st \teno{\Z} \Zp)/\Zp \eps^\chi$ are finite $p$-groups, in order to demonstrate (\ref{buy equality}) it is equivalent to show that the ideals in $\Zp$ generated by their orders are equal, but these ideals are their Fitting ideals over $\Zp$. Now apply part (i) of Theorem \ref{buy different assumption}.
\end{proof}

\gap

\noindent \begin{large} \textbf{Acknowledgments} \end{large}

\gap

\noindent The author would like to thank the referee for reading the manuscript carefully, and for making valuable suggestions.


\begin{thebibliography}{10}

\bibitem{bley:quadim}
W.~Bley.
\newblock Equivariant {T}amagawa number conjecture for abelian extensions of a
  quadratic imaginary field.
\newblock {\em Doc. Math.}, 11:73--118 (electronic), 2006.

\bibitem{bs:fracfunct}
P.~Buckingham and V.~Snaith.
\newblock Functoriality of the canonical fractional {G}alois ideal.
\newblock {\em To appear in the Canadian Journal of Mathematics}.

\bibitem{buckingham:frac}
Paul Buckingham.
\newblock The canonical fractional {G}alois ideal at $s = 0$.
\newblock {\em J. Number Theory}, 128(6):1749--1768, 2008.

\bibitem{bf:tamagawa}
D.~Burns and M.~Flach.
\newblock Tamagawa numbers for motives with (non-commutative) coefficients.
\newblock {\em Doc. Math.}, 6:501--570 (electronic), 2001.

\bibitem{burns:congruences}
David Burns.
\newblock Congruences between derivatives of abelian {$L$}-functions at
  {$s=0$}.
\newblock {\em Invent. Math.}, 169(3):451--499, 2007.

\bibitem{burns:ffcongruences}
David Burns.
\newblock Congruences between derivatives of geometric {$L$}-functions.
\newblock {\em Preprint}, 2008.

\bibitem{bg:equiv}
David Burns and Cornelius Greither.
\newblock On the equivariant {T}amagawa number conjecture for {T}ate motives.
\newblock {\em Invent. Math.}, 153(2):303--359, 2003.

\bibitem{buyukboduk:kolofstark}
K{\^a}zim B{\"u}y{\"u}kboduk.
\newblock Kolyvagin systems of {S}tark units.
\newblock {\em J. Reine Angew. Math.}, 631:85--107, 2009.

\bibitem{cs:stickel}
John Coates and Warren Sinnott.
\newblock An analogue of {S}tickelberger's theorem for the higher {$K$}-groups.
\newblock {\em Invent. Math.}, 24:149--161, 1974.

\bibitem{dst:multiquad}
David~S. Dummit, Jonathan~W. Sands, and Brett Tangedal.
\newblock Stark's conjecture in multi-quadratic extensions, revisited.
\newblock {\em J. Th\'eor. Nombres Bordeaux}, 15(1):83--97, 2003.
\newblock Les XXII\`emes Journ\'ees Arithmetiques (Lille, 2001).

\bibitem{neukirch:alg}
J{\"u}rgen Neukirch.
\newblock {\em Algebraic number theory}, volume 322 of {\em Grundlehren der
  Mathematischen Wissenschaften [Fundamental Principles of Mathematical
  Sciences]}.
\newblock Springer-Verlag, Berlin, 1999.
\newblock Translated from the 1992 German original and with a note by Norbert
  Schappacher, With a foreword by G. Harder.

\bibitem{northcott:ffr}
D.~G. Northcott.
\newblock {\em Finite free resolutions}.
\newblock Cambridge University Press, Cambridge, 1976.
\newblock Cambridge Tracts in Mathematics, No. 71.

\bibitem{popescu:refined}
Cristian~D. Popescu.
\newblock On a refined {S}tark conjecture for function fields.
\newblock {\em Compositio Math.}, 116(3):321--367, 1999.

\bibitem{popescu:rubin}
Cristian~D. Popescu.
\newblock Rubin's integral refinement of the abelian {S}tark conjecture.
\newblock In {\em Stark's conjectures: recent work and new directions}, volume
  358 of {\em Contemp. Math.}, pages 1--35. Amer. Math. Soc., Providence, RI,
  2004.

\bibitem{rubin:integralstark}
Karl Rubin.
\newblock A {S}tark conjecture ``over {$\bold Z$}'' for abelian {$L$}-functions
  with multiple zeros.
\newblock {\em Ann. Inst. Fourier (Grenoble)}, 46(1):33--62, 1996.

\bibitem{snaith:stark}
Victor~P. Snaith.
\newblock Stark's conjecture and new {S}tickelberger phenomena.
\newblock {\em Canad. J. Math.}, 58(2):419--448, 2006.

\bibitem{stark:iv}
Harold~M. Stark.
\newblock {$L$}-functions at {$s=1$}. {IV}. {F}irst derivatives at {$s=0$}.
\newblock {\em Adv. in Math.}, 35(3):197--235, 1980.

\bibitem{tate:stark}
John Tate.
\newblock {\em Les conjectures de {S}tark sur les fonctions {$L$} d'{A}rtin en
  {$s=0$}}, volume~47 of {\em Progress in Mathematics}.
\newblock Birkh\"auser Boston Inc., Boston, MA, 1984.
\newblock Lecture notes edited by Dominique Bernardi and Norbert Schappacher.

\end{thebibliography}
\end{document}